\newcolumntype{C}{>{\Centering\arraybackslash}m{0.14\linewidth}}
\numberwithin{equation}{section}
\theoremstyle{plain}
\newtheorem{theorem}{Theorem}[section]
\newtheorem{lemma}[theorem]{Lemma}
\theoremstyle{definition}
\newtheorem*{defi*}{Definition} 
\theoremstyle{remark}
\newtheorem{remark}{Remark}
\theoremstyle{remark}
\let\alignts@preamble\align@preamble
\patchcmd{\alignts@preamble}{\displaystyle}{\textstyle}{}{}
\patchcmd{\alignts@preamble}{\displaystyle}{\textstyle}{}{}
\def\alignts{\let\align@preamble\alignts@preamble\start@align\@ne\st@rredfalse\m@ne}
\title[Singularity formation]{Singularity formation of hydromagnetic waves in cold plasma}
\author[J. Bae]{Junsik Bae}
\address[JB]{Department of Mathematical Sciences, Ulsan National Institute of Science and Technology, Ulsan, 44919, Republic of Korea}
\email{junsikbae@unist.ac.kr}
\author[J. Choi]{Junho Choi}
\address[JC]{School of Mathematical Sciences, Korea Advanced Institute of Science and Technology, Daejeon, 34141, Republic of Korea}
\email{junho\_choi@kaist.ac.kr}
\author[B. Kwon]{Bongsuk Kwon}
\address[BK]{Department of Mathematical Sciences, Ulsan National Institute of Science and Technology, Ulsan, 44919, Republic of Korea}
\email{bkwon@unist.ac.kr}
\date{\today}
\begin{document}
 
\maketitle 

\begin{abstract}
We study $C^1$ blow-up of the compressible fluid model introduced by Gardner and Morikawa, which describes the dynamics of a magnetized cold plasma.  We propose sufficient conditions that lead to  $C^1$ blow-up. In particular, we find that smooth solutions can break down in  finite time even if the gradient of initial velocity is identically zero. The density and the gradient of the velocity  become unbounded as time approaches the lifespan of the smooth solution. The Lagrangian formulation reduces the singularity formation problem to finding a zero of the associated second-order ODE.\\

\noindent{\it Keywords}:
Cold plasma; Hydromagnetic waves; Singularities

\end{abstract} 

\section{Introduction} 
Under suitable assumptions, the motion of a magnetized  cold plasma can be described by the the following simplified model  \cite{Gar}:
\begin{subequations}\label{GM2}
\begin{align}
& \rho_t +  (\rho u)_x = 0, \label{GM2_1} \\ 
& u_t  + u u_x = vB, \label{GM2_2} \\
& B_x = -\rho v, \label{GM2_3} \\
& v_x = \rho - B,  \label{GM2_4}
\end{align}
\end{subequations} 
where $\rho>0$ represents the number density of ions, $u$ is the $x$-component of the ion velocity, $v$ is the difference between the $y$-components of the ion and electron velocities, and $B$ is the $z$-component of the magnetic field. All unknowns in \eqref{GM2} are functions of  $(t,x)\in [0,\infty)\times \mathbb{R}$. The system \eqref{GM2} can be formally derived from the two-species 3D Euler-Maxwell system \cite{Ch}  under the following assumptions: (i) all unknown functions are uniform in the $y$ and $z$ directions, (ii) 
the magnetic field is applied only in the $z$ direction, (iii)   the densities of ions and electrons are the same (quasineutrality), (iv) slow motion (the displacement current is neglected), (v)  cold plasma (the pressure effects are neglected), and (vi) \eqref{GM2_4} holds at $t=0$. 

The system \eqref{GM2} was introduced in \cite{Gar} to investigate  hydromagnetic waves propagating across a magnetic field. Despite being one of the first examples from which the KdV equation was derived outside the context of water waves, the system \eqref{GM2} has not received much attention. We refer to \cite{BeKa,KOTW} for studies on the oblique propagation of hydromagnetic waves. In \cite{PuLi}, the KdV limit of \eqref{GM2} is rigorously justified.  The work of  \cite{ADG} formally derives some asymptotic models of \eqref{GM2} and investigates their properties. In particular, wave-breaking phenomena (derivative blow-up) can occur in these asymptotic models of \eqref{GM2}; see \cite{ADG,YC}.

 We show that the solution to \eqref{GM2}  blows up in finite time for a certain class of initial data. Our result is particularly interesting since it implies that the solution may blow up even if the gradient of the initial velocity $u_0$ is identically zero.

\subsection{Main result}
We consider smooth solutions to \eqref{GM2} with the far-field state $(\rho,u,v,B) \to (1,0,0,1)$ as $|x| \to \infty$. 
For initial data $(\rho_0-1,u_0)\in H^2(\mathbb{R})\times H^3(\mathbb{R})$, the classical  solution to the system \eqref{GM2} exists locally in time \cite{AlGr}. For given $\rho>0$, we see that  $v$ and $B$ are determined by \eqref{GM2_3}--\eqref{GM2_4}: $v_{xx} - \rho v = \rho_x$ and $B_x=-\rho v$. As long as the smooth solution to \eqref{GM2} exists, the energy
\begin{equation}\label{energy}
H(t):= \frac{1}{2}\int_{\mathbb{R}} \rho(u^2+v^2) + (B-1)^2  \,dx
\end{equation} 
is conserved, i.e., $H(0)=H(t)$ for $t \geq 0$.

Now, we present our main theorem. Let $h_0:= 2H(0) + \sqrt{4(H(0))^2 + 2H(0)}$.
\begin{theorem}\label{MainThm}
If the initial data satisfies one of the following:
\begin{subequations}\label{Blow}
\begin{align}
& (i) \quad   h_0 < 1 \quad \text{and}\quad   \rho_0(\alpha) < \frac{(1-h_0)^2}{2(1+h_0)} \quad \text{for some } \alpha \in\mathbb{R}, \label{MainThm1} \\ 
& (ii) \quad  h_0 < 1 \quad \text{and}\quad  -\sqrt{2(1+h_0)\rho_0(\alpha) - (1-h_0)^2} \geq \partial_x u_0(\alpha) \quad \text{for some } \alpha \in\mathbb{R}, \label{ThmC1}  \\
& (iii) \quad  -\sqrt{2(1+h_0)\rho_0(\alpha) } \geq \partial_x u_0(\alpha) \quad \text{for some } \alpha \in\mathbb{R}, \label{ThmC5}
\end{align}
\end{subequations}  
then  the maximal existence time $T_\ast$ of the classical solution to the system \eqref{GM2} is finite. Moreover,   $\| \rho(t,\cdot) \|_{L^\infty} \to \infty$ and $\| \partial_x u(t,\cdot) \|_{L^\infty} \to \infty$ as $t\nearrow T_\ast$.
\end{theorem}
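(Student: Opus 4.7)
The plan is to pass to Lagrangian coordinates, reduce the singularity formation problem to the finite-time vanishing of the flow Jacobian, and then use energy conservation to set up an ODE comparison argument.

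First I introduce the Lagrangian flow $X(t,\alpha)$ by $\partial_t X = u(t,X)$, $X(0,\alpha) = \alpha$, and define the Jacobian $J(t,\alpha) := \partial_\alpha X$. Mass conservation \eqref{GM2_1} yields $\rho(t,X)\,J(t,\alpha) = \rho_0(\alpha)$, so $\|\rho(t,\cdot)\|_{L^\infty}\to\infty$ is equivalent to $\inf_\alpha J(t,\alpha) \to 0$. Using $\dot J = u_x J$, the spatial derivative of \eqref{GM2_2}, and the identity $(vB)_x = \rho B - B^2 - \rho v^2$ (a direct consequence of \eqref{GM2_3}--\eqref{GM2_4}), the mass identity turns the material computation into the closed Lagrangian second-order ODE
\[
\ddot J + B^2 J = \rho_0(\alpha)\,(B - v^2), \qquad J(0,\alpha) = 1, \quad \dot J(0,\alpha) = \partial_x u_0(\alpha),
\]
in which $B$ and $v$ are evaluated at $(t, X(t,\alpha))$.

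Next I would establish a uniform pointwise bound $|B(t,x) - 1| \leq h_0$ on the space-time domain. The key tool is the integral identity $\tfrac{1}{2} v^2(x) + (B(x) - 1) = \int_x^\infty vB\,dy$, obtained by integrating $(v^2)_x = -2B_x - 2vB$ using the far-field conditions. Combining this with weighted Cauchy--Schwarz applied against the energy-controlled $L^2$-bounds $\int(B-1)^2\,dx \leq 2H(0)$ and $\int \rho v^2\,dx \leq 2H(0)$ should produce a self-referential inequality of the form $M^2 \leq 2H(0)(1 + 2M)$ for $M := \sup_{t,x}|B(t,x) - 1|$, which closes to exactly $M \leq h_0$. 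Under the additional assumption $h_0 < 1$ appearing in (i) and (ii), this also gives the one-sided bound $B \geq 1 - h_0 > 0$.

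Substituting these bounds into the Lagrangian ODE yields the differential inequality
\[
\ddot J \leq \rho_0(\alpha)(1 + h_0) - (1 - h_0)^2 J
\]
for cases \eqref{MainThm1} and \eqref{ThmC1}, while for \eqref{ThmC5}, which does not need $h_0 < 1$, I would use only the weaker $\ddot J \leq \rho_0(\alpha)(1+h_0)$ obtained by dropping the nonnegative term $B^2 J$. A standard ODE comparison principle then gives $J \leq \tilde J$ on the relevant interval, where $\tilde J$ solves the corresponding equality---a translated cosine oscillator in cases (i)--(ii), or an upward-opening parabola in case (iii). Elementary analysis of $\tilde J$ shows that it attains zero in finite time precisely under the corresponding hypothesis in \eqref{Blow}: condition (i) forces the "equilibrium level" $\rho_0(1+h_0)/(1-h_0)^2$ below $1/2$, so the oscillation dips below zero even with no initial velocity; conditions (ii) and (iii) instead require enough negative initial velocity $\dot J(0)^2$ to overcome the barriers $2\rho_0(1+h_0) - (1-h_0)^2$ and $2\rho_0(1+h_0)$, respectively. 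Since $\rho = \rho_0/J$ and $u_x = \dot J/J$, the vanishing of $J$ at some finite $t_* \leq T_\ast$ forces the stated blow-up.

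The decisive difficulty I anticipate is the pointwise estimate in Step 2. Because $\rho\to 1$ at $\pm\infty$, the integral $\int \rho\,dx$ diverges, so a direct Cauchy--Schwarz applied to $|B-1|(x) = |\int_x^\infty \rho v\,dy|$ fails to produce any useful bound. Closing the self-referential inequality to exactly the quadratic $M^2 = 2H(0)(1 + 2M)$ will require carefully exploiting the integrated identity for $\tfrac12 v^2 + (B-1)$ so that only quantities controlled by $H(0)$ and $M$ itself appear on the right-hand side; this is precisely the algebraic step that produces the explicit constant $h_0 = 2H(0) + \sqrt{4H(0)^2 + 2H(0)}$.
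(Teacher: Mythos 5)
Your overall architecture is exactly that of the paper: the Lagrangian reduction to $\ddot J + B^2 J = \rho_0(\alpha)(B - v^2)$ with $\rho J=\rho_0$, a uniform bound $\sup|B-1|\le h_0$, and a Duhamel/comparison argument on the resulting differential inequality. Your case analysis is also the correct one: the threshold $2(1+h_0)\rho_0 \lessgtr (1-h_0)^2$ separating (i) from (ii), and the upward parabola obtained by discarding $B^2J\ge 0$ for (iii), which indeed needs no smallness of $h_0$. The genuine gap is the one you yourself flag: the uniform bound on $B$ is never actually proved, and the route you propose does not close as stated. From your (correct) identity $\tfrac12 v^2(x) + (B(x)-1) = \int_x^\infty vB\,dy$ you must estimate $\int_x^\infty vB\,dy$; splitting $vB = v(B-1) + v$ leaves the term $\int_x^\infty v\,dy$, which is not controlled by $H(0)$ (the energy only controls $\int\rho v^2\,dx$, and Cauchy--Schwarz against the constant $1$ diverges), while pairing $\sqrt{\rho}\,v$ with $B/\sqrt{\rho}$ produces $\int B^2/\rho\,dx$, which also diverges since the integrand tends to $1$ at infinity. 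So the ``self-referential inequality $M^2\le 2H(0)(1+2M)$'' is asserted rather than derived, and this is precisely the nontrivial content of the key lemma.

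The paper closes this step by a different device. Writing $\tilde B=B-1$, $\tilde\rho=\rho-1$, one starts from $\tfrac12\tilde B^2(x)=\int_{-\infty}^x\tilde B\tilde B_y\,dy$ and applies Young's inequality with the weight $\rho$: $\tilde B\tilde B_y\le \tilde B_y^2/(2\rho)+\rho\tilde B^2/2$. By \eqref{GM2_3} the first term is exactly $\rho v^2/2$, which the energy controls; the second splits as $\tilde B^2/2+\tilde\rho\tilde B^2/2$, and the cubic-type remainder is converted, via \eqref{GM2_4} and an integration by parts, into $\int\tilde\rho\tilde B^2\,dx=\int v_x\tilde B^2\,dx=\int 2\rho v^2\tilde B\,dx$, whence $\int\tilde\rho\tilde B^2/2\,dx=\int(\rho v^2+\tilde B^2/2)\tilde B\,dx\le 2H(0)\sup|\tilde B|$. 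This yields $\tfrac12 M^2\le H(0)+2H(0)M$ and hence $M\le h_0$. You would need to supply this (or an equivalent) argument; without it, the differential inequalities $\ddot J\le(1+h_0)\rho_0-(1-h_0)^2J$ and $\ddot J\le(1+h_0)\rho_0$ on which your entire comparison argument rests are unsupported. The remainder of your outline (nonnegativity of the kernel $\sin(\sqrt a\,t)/\sqrt a$ on $[0,\pi/\sqrt a]$, the sign discussion of $\dot J(0)$ in case (ii), and the passage from $J\to 0$ to blow-up of $\rho$ and $u_x$) is correct modulo this lemma.
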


We notice that the condition \eqref{MainThm1} does not require $u_0$ to have a negative gradient. We illustrate a class of the initial data satisfying $h_0<1$ and \eqref{MainThm1}. Using the identity \eqref{Iden1}, we obtain
\[
H(0) \leq \frac{\sup_{x\in \mathbb{R}}\rho_0}{2}\int_{\mathbb{R}}|u_0|^2\,dx + \int_{\mathbb{R}} (  \rho_0 -1)^2\,dx. 
\]

Hence, it is clear that one can choose the initial data such that $H(0) \ll 1$ and $\inf \rho_0 < 1/2$. In particular, one can choose $u_0 \equiv 0$ and $\rho_0(\alpha) = 1 - \delta \eta_\mu(\alpha)$, where $\eta_\mu$ is a standard bump function supported on $[-\mu, \mu]$, with $\mu>0$ and $\delta>0$ being sufficiently small.

\begin{figure}[h]
\begin{center}
\includegraphics[width=.92\linewidth]{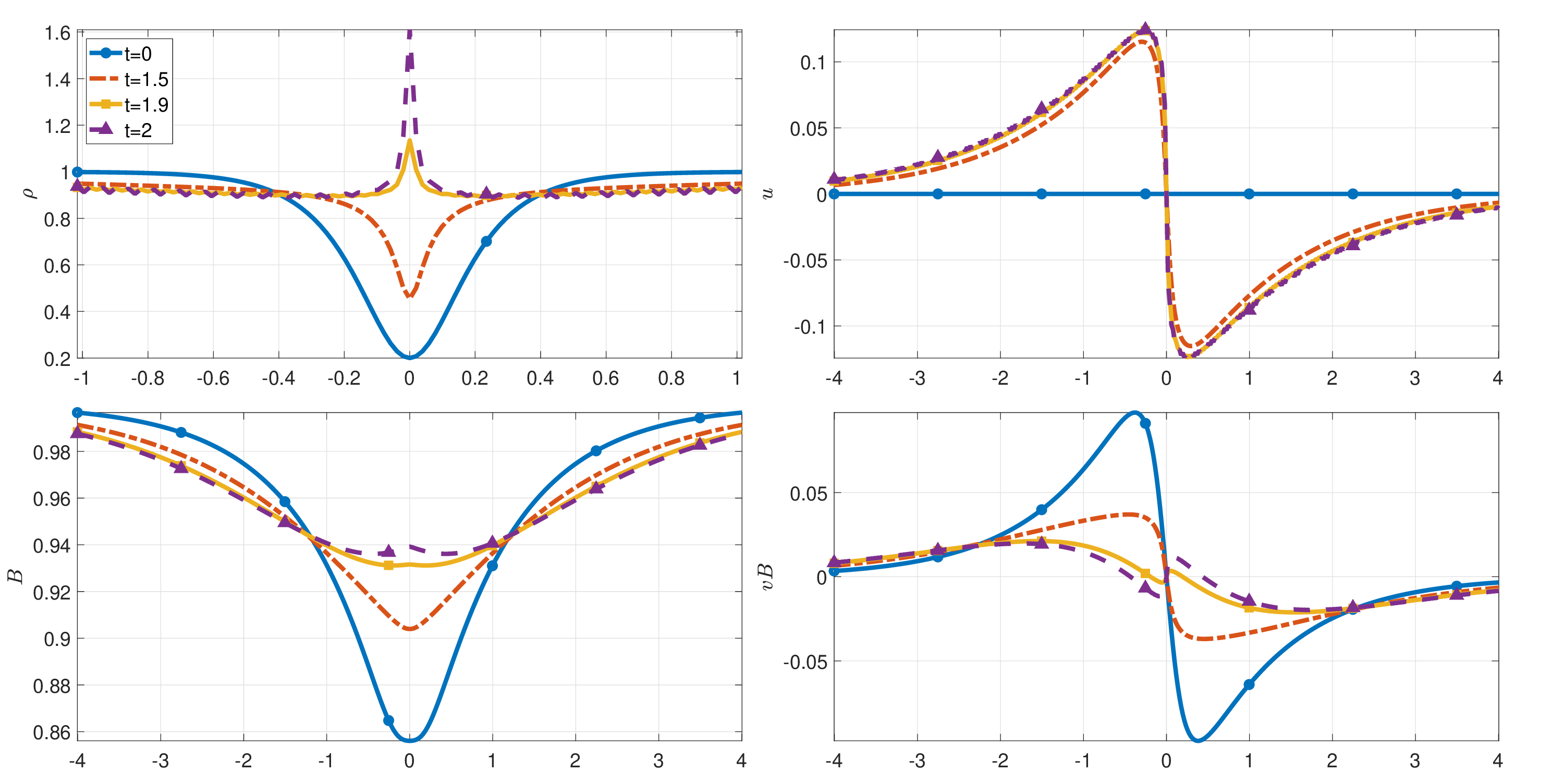}
\end{center}
\caption[Caption for LOF]{A numerical example to demonstrate a solution-blow-up scenario which holds \eqref{MainThm1}. When setting $\rho_0=1-0.8sech(7x)$, $u_0=0$ as initial conditions for \eqref{GM2}, the energy \eqref{energy} is found to be $H(0)=0.0247$. This leads to $h_0=0.2774<1$ which satisfies the criterion \eqref{MainThm1} at $\alpha=0$, that is, $0.2=\rho(0)<\frac{(1-h_0)^2}{2(1+h_0)}=0.2044$. Top-left: $\rho$ profiles, top-right: $u$, bottom-left: $B$, and bottom-right: $vB$ at $t=0$, $t=1.5$, $t=1.9$, $t=2.0$.  }
\label{GM_figures}

\end{figure} 

\begin{figure}[h]
\begin{center}

\includegraphics[width=.92\linewidth]{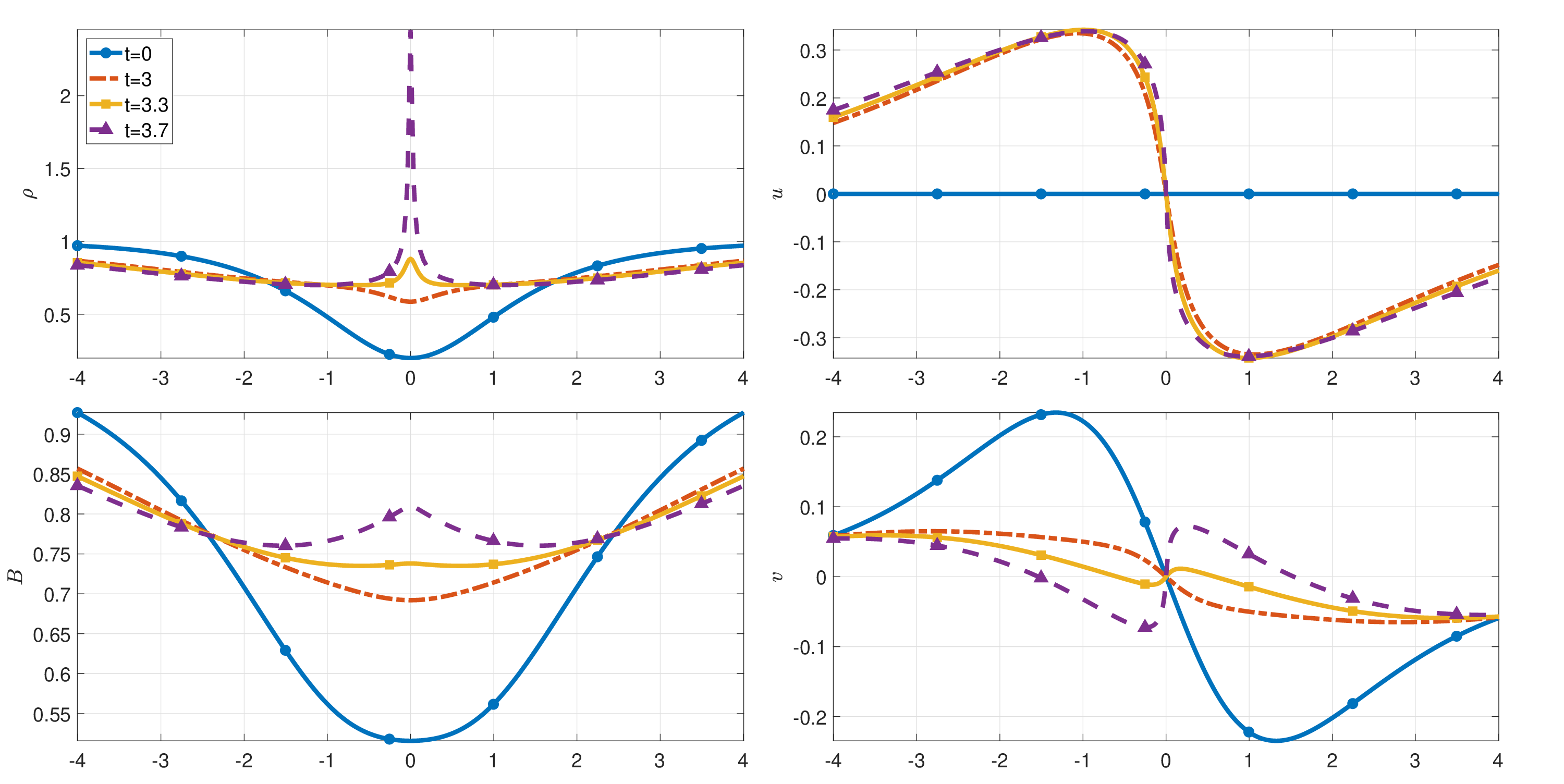}

\end{center}

\caption[Caption for LOF]{A numerical example to demonstrate a solution-blow-up scenario which does not hold \eqref{MainThm1}. When setting $\rho_0=1-0.8sech(x)$, $u_0=0$ as initial conditions for \eqref{GM2}, the energy \eqref{energy} is found to be $H(0)=0.4977$. This leads to $h_0=2.4049$ which is not less than $1$ in the criterion \eqref{MainThm1}. However, it is computed that $0.2=\rho(0)<\frac{(1-h_0)^2}{2(1+h_0)}=0.2897$ at $\alpha=0$. Top-left: $\rho$ profiles, top-right: $u$, bottom-left: $B$, and bottom-right: $v$ at $t=0$, $t=3$, $t=3.3$, $t=3.7$.  }
\label{GM_figures1}
\end{figure}

   As $t$ approaches the blow-up time $T_\ast$, $\|v_x(t,\cdot)\|_{L^\infty}$ diverges  since $\|B(t,\cdot)\|_{L^\infty}$ remains  uniformly bounded on $[0,T_\ast)$ (see \eqref{GM2_4} and Lemma \ref{thm2}). On the other hand, our numerical simulations show that $\|v(t,\cdot)\|_{L^\infty}$ is also uniformly bounded on $[0,T_\ast)$  (see Figure \ref{GM_figures} and \ref{GM_figures1}) resulting in the blow-up of $B_x$ due to \eqref{GM2_3}. We also remark that various simulations suggest that sufficient conditions \eqref{Blow} are not optimal (see Figure \ref{GM_figures1}).

The gradient of the velocity blows up due to the hyperbolic part \eqref{GM2_2} of \eqref{GM2}. On the other hand, due to the absence of pressure, the system \eqref{GM2} is \textit{weakly coupled} and not hyperbolic, leading to the density blow-up. A natural question arises: what is the asymptotic behavior of solutions near (and at) the blow-up time and location? Specifically, at the blow-up time, (1) whether the density blow-up profile is the so-called delta shock, and (2) whether the blow-up profile for $u$ exhibits a jump discontinuity. In fact, the same question was posed in \cite{BCK} for the pressureless Euler-Poisson system, and it was  shown  in \cite{BKK} that, generically, the density is not a Dirac measure and the velocity exhibits $C^{1/3}$   regularity at the blow-up time. It would be interesting to investigate more precise structure of singularities in the solutions to \eqref{GM2}.

To prove Theorem \ref{MainThm}, we follow the strategy of \cite{BCK} (see also Appendix 5.5 in \cite{BKK}), which is two-fold:  we   derive a second-order ODE using the Lagrangian formulation and establish a uniform (in $x$ and $t$) bound of $B$. More specifically, by defining
$
w(t,\alpha):= \frac{\partial x}{\partial \alpha}(t,\alpha),
$
where  $x(t,\alpha)$ is the characteristic curve satisfying $\frac{dx}{dt}(t,\alpha) = u(t,x(t,\alpha))$, $x(0,\alpha)=\alpha \in \mathbb{R}$, we derive the initial value problem for the second-order ODE  for $w$:
\begin{equation}\label{ODEA}
\frac{d^2w}{dt^2} (t,\alpha) +B^2w(t,\alpha) = B\rho_0(\alpha)  - v^2\rho_0(\alpha), \quad w(0,\alpha) =1, \quad \frac{dw}{dt}(0,\alpha) = \partial_x u_{0}(\alpha),
\end{equation}
where $B$ and $v$ are evaluated at $(t,x(t,\alpha))$. If $w$ vanishes at some finite time $t=T_\ast$, then the solution to \eqref{GM2} blows up in the $C^1$ topology. Hence, it boils down to finding sufficient conditions that guarantee $w$ vanishes in finite time. 

We note that \eqref{ODEA} is not a closed ODE since $B$ and $v$ are not given functions but are determined by $\rho$ via \eqref{GM2_3}--\eqref{GM2_4}. Nevertheless, by obtaining the uniform bound for $B$ in Lemma \ref{thm2},  we can perform  a comparison using the associated second-order differential inequality (see \eqref{2ndODI}).

%

The numerical simulations presented in Figures \ref{GM_figures} and \ref{GM_figures1} are conducted using the implicit pseudo-spectral method as detailed in \cite{LS} with $\Delta x=10/2^{10}$ in the spatial domain $[-10,10]$ and the Crank-Nicolson method with $\Delta t=0.01$ in the temporal domains $[0,2]$ and $[0,3.7]$, respectively.
 
\section{Proof of Theorem \ref{MainThm}}
We first show the key lemma concerning with the uniform estimates for $B$. 
\begin{lemma}\label{thm2}
As long as the classical solution to \eqref{GM2} exists, it holds that for $t\geq 0$,
\begin{subequations} 
\begin{align}
&  \sup_{x \in \mathbb{R}}| B -1 |   \leq 2 H(0) + \sqrt{4(H(0))^2+2 H(0)} =: h_0, \label{GM_B_bound_Lem} \\ 
&  \int_{\mathbb{R}} (\rho-1)^2 \,dx = \int_{\mathbb{R}} (B-1)^2 + |v_x|^2  +2\rho v^2 \,dx. \label{Iden1}
\end{align}
\end{subequations} 
\end{lemma}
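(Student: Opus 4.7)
My plan is to establish the identity \eqref{Iden1} first and then use \eqref{GM2_3}--\eqref{GM2_4} together with conservation of $H(t)$ to derive the uniform bound \eqref{GM_B_bound_Lem}, ultimately by reducing it to a scalar quadratic inequality in $M(t) := \|B(t,\cdot)-1\|_{L^\infty}$.

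For \eqref{Iden1}, I would rewrite \eqref{GM2_4} as $\rho - 1 = v_x + (B-1)$, square pointwise, and integrate over $\mathbb{R}$. The only nontrivial ingredient is the cross term, handled by integration by parts using \eqref{GM2_3}: $\int v_x(B-1)\,dx = -\int v B_x\,dx = \int \rho v^2\,dx$, with boundary terms vanishing by the far-field condition. Collecting the three pieces yields \eqref{Iden1}.

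For the sup bound, I would begin with the pointwise representation $(B(t,x)-1)^2 = -2\int_{-\infty}^x (B-1)\rho v\,dy$, obtained by integrating $((B-1)^2)_x = -2(B-1)\rho v$ from $-\infty$ and invoking $B\to 1$ at $-\infty$. Taking the supremum in $x$ and then applying Young's inequality $2ab \le a^2 + b^2$ to the integrand with $a = |B-1|\sqrt{\rho}$ and $b = \sqrt{\rho}\,|v|$ gives
\[
M^2 \;\le\; \int \rho(B-1)^2\,dx + \int \rho v^2\,dx.
\]
To recast the weighted integral, I would decompose $\int \rho(B-1)^2 = \int (B-1)^2 + \int (\rho-1)(B-1)^2$ and once more use $\rho - 1 = v_x + (B-1)$; the $v_x$ piece is rewritten via integration by parts and \eqref{GM2_3} as $\int v_x(B-1)^2\,dx = 2\int (B-1)\rho v^2\,dx$, producing the clean expansion $\int (\rho-1)(B-1)^2 = 2\int (B-1)\rho v^2 + \int (B-1)^3$.

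The arithmetic endgame is the delicate part. Bounding the two remainders by $|B-1| \le M$ yields $\bigl|\int (\rho-1)(B-1)^2\bigr| \le M\bigl(2\int \rho v^2 + \int (B-1)^2\bigr)$, and the key observation is that conservation of $H(t) = H(0)$ implies the \emph{joint} bound $\int \rho v^2 + \int (B-1)^2 \le 2H(0)$, whence $2\int \rho v^2 + \int (B-1)^2 \le 4H(0)$. Assembling everything, one obtains
\[
M^2 \;\le\; 2H(0) + 4H(0)\,M,
\]
whose positive root is precisely $h_0 = 2H(0) + \sqrt{4H(0)^2 + 2H(0)}$. The main obstacle is this bookkeeping: bounding $\int \rho v^2$ and $\int (B-1)^2$ separately by $2H(0)$ produces strictly looser constants in the quadratic, and only the joint energy bound on their sum recovers the sharp $h_0$ stated in \eqref{GM_B_bound_Lem}.
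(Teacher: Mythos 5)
Your proof is correct and follows essentially the same route as the paper: the identity \eqref{Iden1} via $\rho-1=v_x+(B-1)$ and integration by parts, and the sup bound via the pointwise representation of $(B-1)^2$, Young's inequality, the same decomposition of $\int\rho(B-1)^2$, and the same quadratic inequality $M^2\le 2H(0)+4H(0)M$ closed by the joint energy bound. No gaps.
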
 

\begin{proof} In this proof, we let $\tilde{B}:=B-1$ and $\tilde{\rho}:=\rho-1$ for simplicity.
We first show \eqref{GM_B_bound_Lem}. Using Young's inequality and \eqref{GM2_3}, we have
\begin{equation}\label{GM_B_bound}
\begin{split}
\frac{1}{2}|\tilde{B}|^2 
& = \int_{-\infty}^x \tilde{B}\tilde{B}_y\,dy   \leq \int_\mathbb{R}  \frac{\tilde{B}_x^2}{2\rho} + \frac{\rho\tilde{B}^2}{2} \,dx  
 = \int_\mathbb{R} \frac{\rho v^2}{2} + \frac{\tilde{B}^2}{2} + \frac{\tilde{\rho}\tilde{B}^2}{2}\,dx.
\end{split}
\end{equation}
 Using \eqref{GM2_4}, integrating by parts, and then using \eqref{GM2_3}, we obtain
\begin{equation*}
\begin{split}
\int_\mathbb{R} (\tilde{\rho} - \tilde{B})\tilde{B}^2 \,dx 
&  = \int_\mathbb{R} v_x \tilde{B}^2 \,dx  
 = -\int_\mathbb{R} 2v\tilde{B}\tilde{B}_x \,dx
  = \int_\mathbb{R} 2\rho v^2\tilde{B}\,dx,
\end{split}
\end{equation*}
which implies 
\begin{equation}\label{GM_B_bound2}
\begin{split}
\int_\mathbb{R} \frac{\tilde{\rho}\tilde{B}^2}{2}\,dx 
& = \int_\mathbb{R} \left(\rho v^2 + \frac{\tilde{B}^2}{2}\right)\tilde{B}\,dx  
 \leq \sup_{x \in \mathbb{R}}|\tilde{B}|\int_\mathbb{R} \rho v^2 + \frac{\tilde{B}^2}{2}\,dx
  \leq 2H(0)\sup_{x \in \mathbb{R}}|\tilde{B}|.
\end{split}
\end{equation}
Combining \eqref{GM_B_bound} and \eqref{GM_B_bound2}, we get
\begin{equation}
\frac{1}{2}\left(\sup_{x \in \mathbb{R}}|\tilde{B}|\right)^2 \leq \int_\mathbb{R} \frac{\rho v^2}{2} + \frac{\tilde{B}^2}{2} \,dx + 2H(0)\sup_{x \in \mathbb{R}}|\tilde{B}| \leq H(0) + 2H(0)\sup_{x \in \mathbb{R}}|\tilde{B}|,
\end{equation}
which yields \eqref{GM_B_bound_Lem}.

Now we show \eqref{Iden1}. Using  \eqref{GM2_4}, integrating by parts, and then using \eqref{GM2_3}, we get
\[
\begin{split}
\int_{\mathbb{R}} (\rho-1)^2 \,dx
& = \int_{\mathbb{R}} (B-1+v_x)^2 \,dx = \int_{\mathbb{R}} \tilde{B}^2 + |v_x|^2 - 2\tilde{B}_x v \,dx  = \int_{\mathbb{R}} \tilde{B}^2 + |v_x|^2  +2\rho v^2 \,dx.
\end{split}
\]
 This finishes the proof.  
\end{proof}

Now we derive the second-order ODE \eqref{ODEA}.  For $u\in C^1$, let $x(t,\alpha)$ be the solution to the ODE
\begin{equation}\label{ODE1}
x'(t,\alpha) = u(t,x(t,\alpha)), \quad x(0,\alpha)=\alpha \in \mathbb{R}, \quad t \geq 0,
\end{equation}
where $':=d/dt$. Here, we consider the initial position $\alpha$ as a parameter.  By taking $\partial_\alpha$ of \eqref{ODE1}, we have
\begin{equation}\label{ODE2}
w'(t,\alpha) = u_x(t,x(t,\alpha))w(t,\alpha), \quad w(0,\alpha)=1,
\end{equation}  
where $w(t,\alpha):=\partial_\alpha x(t,\alpha)$. Integrating \eqref{ODE2}, we get
\begin{equation}
w(t,\alpha) = \exp\left(\textstyle{\int_0^t u_x(s,x(\alpha,s))\,ds}\right).
\end{equation}
 Using \eqref{GM2_1} and \eqref{ODE2}, one can see that  
\begin{equation}\label{ODE4}
w(t,\alpha)\rho(t,x(t,\alpha)) =\rho_0(\alpha).
\end{equation}
By taking $\partial_x$ of \eqref{GM2_2}, we have
\begin{equation}\label{ODE3}
u_x' + (u_x)^2 = (vB)_x = v_xB + vB_x = (\rho-B)B -\rho v^2
\end{equation}
where we have used \eqref{GM2_3} and \eqref{GM2_4}.  Using \eqref{ODE2}, \eqref{ODE3} and \eqref{ODE4}, we get \eqref{ODEA}. Indeed, we have
\begin{equation*} 
\begin{split}
w'' 
& = (u_x w)'  \\
&  = u_x^2w + (-u_x^2+(\rho-B)B -\rho v^2)w  \\
&  = ((\rho-B)B -\rho v^2)w  \\
&   = B\rho_0 - B^2w - v^2\rho_0.
\end{split}
\end{equation*}
From \eqref{ODE4}, we see that $w>0$ since $\rho>0$ and that 
\begin{equation}\label{A1}
\lim_{t \to T_\ast} w(t,\alpha) = 0 \quad \Leftrightarrow \quad \rho(t,x(t,\alpha)) = \infty \quad \Rightarrow  \quad \liminf_{t \to T_\ast} u_x(t,x(t,\alpha)) = -\infty.
\end{equation}

\begin{remark}\label{Rem1}
If $v$ is (uniformly) bounded as long as the solution exists (or if we have  $\lim_{t\to T_\ast}w'(t)<\infty$), one can  show that $\liminf_{t \to T_\ast} u_x(t,x(t,\alpha)) = -\infty$ implies  $\rho(t,x(t,\alpha)) = \infty $. Furthermore, one can also  obtain the blow-up rate $u_x(t,x(t,\alpha)) \sim (t-T_\ast)^{-1}$. We refer to the proof of Lemma 2.3 of \cite{BCK}.
\end{remark} 

In what follows, we prove Theorem \ref{MainThm}.  We first consider the case (i). For the initial data satisfying \eqref{MainThm1}, we let $T_\ast$ be the maximal existence time of the $C^1$ solution to \eqref{GM2}. Then, using  \eqref{GM_B_bound_Lem} for \eqref{ODEA}, we have  
\begin{equation}\label{2ndODI}
w'' + (1-h_0)^2w  \leq (1+h_0)\rho_0(\alpha).
\end{equation} 
We let $a := (1-h_0)^2$ and  $b:= (1+h_0)\rho_0(\alpha)$.  Then, \eqref{2ndODI} can be put in the form of $w'' + aw-b =:f \leq 0$, and we have (recalling $w(0,\alpha)=1$)
\begin{equation}\label{2ndODII}
w(t) = \left( 1-\frac{b}{a} \right) \cos(\sqrt{a} t) + w'(0) \frac{\sin(\sqrt{a} t)}{\sqrt{a}} + \frac{b}{a} + \frac{1}{\sqrt{a}} \int_0^t \sin(\sqrt{a} t') f(t-t')\,dt',
\end{equation}
where we let $w(t)=w(t,\alpha)$ for notational simplicity. 
Since $2b/a \leq 1$ (i.e., \eqref{MainThm1} holds) and the integrand in \eqref{2ndODII} is nonpositive on $t\in  [0,\pi/\sqrt{a}]$, by putting $t=\pi/\sqrt{a}$ into \eqref{2ndODII}, we have $w(\pi/\sqrt{a}) \leq -1 + 2b/a  \leq 0$.  Hence, we conclude that $w$ must vanish at some point  on the interval $[0,\pi/\sqrt{a}]$ by the intermediate value theorem. From \eqref{A1}, this implies that the maximal existence time $T_\ast$ is finite. 

Next, we consider the case (ii). Consider the $C^1$ solution to \eqref{GM2} with the initial data satisfying  \eqref{ThmC1}. Using the trigonometric identity, \eqref{2ndODII} becomes
\begin{equation}\label{2ndODII1}
w(t) = \sqrt{\frac{(w'(0))^2}{a} + \left(1-\frac{b}{a}\right)^2}\cos(\sqrt{a}t  - \theta) + \frac{b}{a} + \frac{1}{\sqrt{a}} \int_0^t \sin(\sqrt{a} t') f(t-t')\,dt',
\end{equation}
where  
\[
\sin \theta = \frac{w'(0)}{\sqrt{a}\sqrt{\frac{(w'(0))^2}{a} + (1-b/a)^2}}, \quad \cos \theta = \frac{1-b/a}{\sqrt{\frac{(w'(0))^2}{a} + (1-b/a)^2}}.
\]
Since $w'(0) \leq 0$,  $\theta\in[-\pi,0]$. Hence, $\sqrt{a}t-\theta \in [0,2\pi]$ for $\sqrt{a}t \in [0,\pi]$, and the cosine function has the minimum value $-1$. We recall that the integrand of \eqref{2ndODII1} is nonpositive on $t\in[0,\pi/\sqrt{a}]$. Hence, from  \eqref{2ndODII1}, we see that if $2b \geq a$ and $w'(0) \leq 0$ (i.e., \eqref{ThmC1} holds), then  $w$ vanishes at some point on $[0,\pi/\sqrt{a}]$.

Lastly, we consider the case (iii). From \eqref{ODEA}, we obtain that $ w'' \leq (1+h_0)\rho_0$ since $w\geq 0$ and $\rho_0>0$.   Integrating it twice in $t$, we have 
\[
w(t) \leq \frac{1}{2}(1+h_0)\rho_0\left( t + \frac{w'(0)}{(1+h_0)\rho_0} \right)^2 - \frac{(w'(0))^2}{2(1+h_0)\rho_0} + 1.
\]
Hence, if \eqref{ThmC5} holds, then $w$ must vanish in  finite time. We finish the proof of Theorem \ref{MainThm}.

\section*{Acknowledgments.}
J.B. was supported by the National Research Foundation of Korea grant funded by the Ministry of Science and ICT (2022R1C1C2005658). J.C. was supported by the National Research Foundation of Korea grant funded by the Korean government (MSIT) (2021R1C1C2008763). B.K. was supported by Basic Science Research Program through the National Research Foundation of Korea (NRF) funded by the Ministry of science, ICT and future planning (NRF-2020R1A2C1A01009184).



\begin{thebibliography}{10}


\bibitem{ADG} D. Alonso-Or\'an, A. Dur\'an, and R. Granero-Belinch\'on: Derivation and well-posedness for asymptotic models of cold plasmas, Nonlinear Anal., 244 (2024), 113539


\bibitem{AlGr} D. Alonso-Or\'an, R. Granero-Belinch\'on: Well-posedness for an hyperbolic-hyperbolic-elliptic system describing cold plasmas,  Appl. Math. Lett. 147 (2024) 108863

\bibitem{BCK} J. Bae, J. Choi, B. Kwon:   Formation of singularities in plasma ion dynamics, Nonlinearity 37 (2024) 045011

\bibitem{BKK} J. Bae, Y. Kim, B. Kwon: Delta-shock for the pressureless Euler-Poisson system, preprint, arXiv:2407.15669 

\bibitem{BeKa} Y. A. Berezin, V. Karpman: Theory of nonstationary finite-amplitude waves in a low-density plasma, Sov. Phys. JETP 19 1265–1271 (1964).

\bibitem{Ch} F.F. Chen: Introduction to plasma physics and controlled fusion. 2nd edition, Springer (1984)

\bibitem{Gar} C. S. Gardner,  G. K. Morikawa: Similarity in the asymptotic behavior of collision-free hydromagnetic waves and water waves. New York Univ., Courant Inst. Math. Sci., Res. Rep. NYO-9082 (1960)

\bibitem{KOTW} T. Kakutani, H. Ono, T. Taniuti, C.-C. Wei: Reductive perturbation method in nonlinear wave propagation
ii. application to hydromagnetic waves in cold plasma, Journal of the Physical Society of Japan, 24 (5)
1159–1166 (1968).

\bibitem{LS}  Y. Li, D. Sattinger: Soliton Collisions in the Ion Acoustic Plasma Equations. J. math. fluid mech. 1, 117-130 (1999)

\bibitem{PuLi} X. Pu, M. Li: Kdv limit of the hydromagnetic waves in cold plasma, Z. Angew. Math. Phys. 70 (1) (2019) 32.



\bibitem{YC} S. Yang, J. Chen: Wave breaking in the unidirectional non-local wave model, Journal of Differential Equations, Volume 377, 2023, Pages 849-858


\end{thebibliography}
 \end{document}